\newtheorem{theorem}{Theorem}[section]
\newtheorem{corollary}[theorem]{Corollary}
\theoremstyle{definition}
\numberwithin{equation}{section}
\begin{document}


\baselineskip=17pt


\title{Inequalities about the area bounded by three cevian lines of a triangle}

\author{Yagub N. Aliyev\\
School of IT and Engineering\\ 
ADA University\\
Ahmadbey Aghaoglu str. 61 \\
Baku 1008, Azerbaijan\\
E-mail: yaliyev@ada.edu.az}
\date{}

\maketitle


\renewcommand{\thefootnote}{}

\footnote{2020 \emph{Mathematics Subject Classification}: Primary 51M16, 51M25; Secondary 51M04, 52A38, 52A40, 97G30, 97G40.}

\footnote{\emph{Key words and phrases}: Steiner-Routh's theorem, Schlömilch's theorem, cevians, area, triangle, geometric inequality, Hölder's inequality, Rigby's inequality, Möbius theorem.}

\renewcommand{\thefootnote}{\arabic{footnote}}
\setcounter{footnote}{0}


\begin{abstract}
In the paper we prove generalization of Schlömilch's and Zetel's theorems about concurrent lines in a triangle. This generalization is obtained as a corollary of sharp geometric inequality about the ratio of triangular areas which is proved using discrete variant of Hölder's inequality. Also a new sharp refinement of J.F. Rigby's inequality, which itself generalized Möbius theorem about the areas of triangles formed by cevians of a triangle, is proved.
\end{abstract}

\section{Introduction}
Consider cevians $AD$, $BE$, and $CF$ of a triangle $ABC$ (see Fig. 1). Denote $\frac{|BD|}{|DC|}=\lambda_1$, $\frac{|CE|}{|EA|}=\lambda_2$, and $\frac{|AF|}{|FB|}=\lambda_3$. Denote also $BE\cap CF=G_1$, $AD\cap CF=G_2$, and $AD\cap BE=G_3$. There is a result in geometry known as Steiner-Routh's theorem which says that
$$\frac{\textnormal{Area}(\triangle G_1G_2G_3)}{\textnormal{Area}(\triangle ABC)}=\frac{(\lambda_1 \lambda_2 \lambda_3-1)^2}{(\lambda_1 \lambda_2+\lambda_1 +1)(\lambda_2 \lambda_3+\lambda_2 +1)(\lambda_3 \lambda_1+\lambda_3 +1)}. \eqno(1)$$
\begin{figure}[htbp]
\centerline{\includegraphics[scale=0.5]{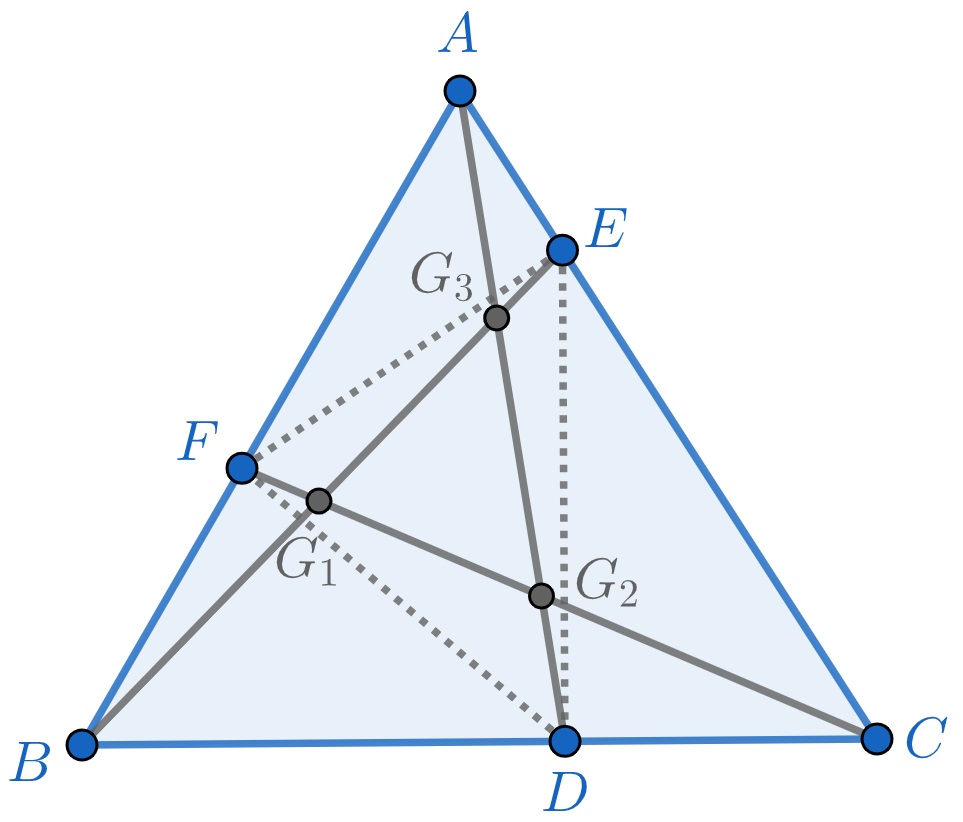}}
\label{fig1}
\caption{Steiner-Routh's theorem.}
\end{figure}
Steiner-Routh's theorem which is sometimes called just Routh's theorem was discussed in many papers and books. See \cite{steiner} p. 166, \cite{glais} p. 33, \cite{routh} p. 89, \cite{dor} p. 41-42, \cite{mik},
\cite{cox} p. 211, 212, \cite{abbud}, \cite{abbud2}, \cite{weis}, \cite{kay} p. 382,
 \cite{klam}, \cite{niven}, \cite{bot}, \cite{bol} p. 276, \cite{vol}, \cite{shir}, \cite{stan}, \cite{shmin}, \cite{deak}, and their references. Steiner-Routh's formula  was generalized in many different directions: \cite{benyi}, \cite{car}, \cite{marko}, \cite{litv}, \cite{zuo}. There is a peculiar special case called \textit{One seventh area triangle} or \textit{Feynman's triangle} which corresponds to case $\lambda_1 =\lambda_2=\lambda_3=2$ and attracted much attention because it can also be proved using dissections (see e.g. \cite{mik}, \cite{steinhaus} p. 9). Most of these sources also mention the following formula
$$\frac{\textnormal{Area}(\triangle DEF)}{\textnormal{Area}(\triangle ABC)}=\frac{\lambda_1 \lambda_2 \lambda_3+1}{(\lambda_1 +1)(\lambda_2 +1)(\lambda_3 +1)}. \eqno(2)$$
Formula (1) and (2) generalize Ceva's $(\lambda_1 \lambda_2 \lambda_3=1)$ and Menelaus' $(\lambda_1 \lambda_2 \lambda_3=-1)$ theorems, respectively. In these cases the areas of $\triangle G_1G_2G_3$ and $\triangle DEF$ are equal to zero, which is equivalent to say that cevians $AD$, $BE$, and $CF$ are concurrent, and points $D$, $E$, and $F$ are collinear, respectively. In general, the vertices of a triangle do not necessarily coincide if its area is zero. It is possible that the vertices of the triangle are just collinear. But this is not possible for $\triangle G_1G_2G_3$, because otherwise points $A,B,$ and $C$ would also be collinear. In the paper we will apply this idea to find a new proof for the following theorem and its generalization. 

\textbf{Schlömilch's theorem.} \textit{The lines connecting the midpoints of the sides of a triangle and the midpoints of the corresponding altitudes are concurrent.}

O. Schlömilch's theorem was discussed in many papers and books. See for example \cite{zvon}, \cite{oprea}, \cite{zetel}, , \cite{ponar} p. 34, 37, \cite{efrem} p. 133, 
\cite{alt} p. 256, 304. In \cite{john}, p. 215 (Corollary), \cite{prasolov} Problem 5.135 it was mentioned that the point of concurrency in Schlömilch's theorem is Lemoine (symmedian) point of the triangle. In \cite{zetel2} S.I. Zetel generalized the result by Schlömilch as follows (see Fig. 2).

\textbf{Zetel's theorem.} \textit{Let trio of cevians $AD$, $BE$, and $CF$ of a triangle $ABC$ be concurrent at point $G$. Let another trio of cevians $AK$, $BL$, and $CM$ of triangle $ABC$ be concurrent at point $H$. Denote $AK\cap EF=N$, $BL\cap DF=Q$, and $CM\cap DE=P$. Then lines $DN$, $EQ$, and $FP$ are concurrent.}

\begin{figure}[htbp]
\centerline{\includegraphics[scale=0.5]{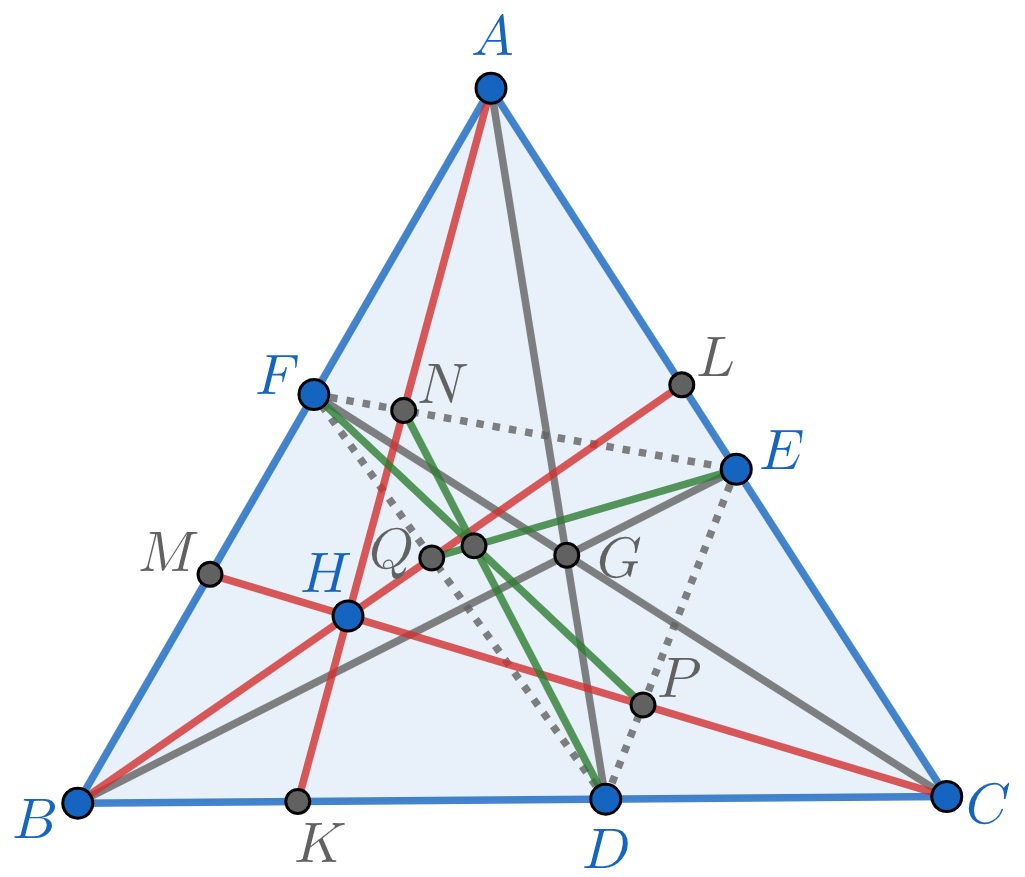}}
\label{fig2}
\caption{Zetel's generalization of Schlömilch's theorem.}
\end{figure}

From the viewpoint of projective geometry this generalization is equivalent to Schlömilch's theorem. Indeed, by Desarques' theorem the intersection points $EF\cap BC$, $DF\cap AC$, and $DE\cap AB$ are on a line. Let us apply a projective transformation sending this line to infinity. We will continue to use the original notations for their
images under these transformations. This transformation forces $EF||BC$, $DF||AC$, $DE||AB$, and therefore points $D,E,F$ are the midpoints of sides $BC$, $AC$, $AB$, respectively. Then apply affine transformations changing $AK$ and $BL$ to the corresponding altitudes of $\triangle ABC$. Then $CM$ is also the altitude of $\triangle ABC$, and therefore we return to Schlömilch's theorem. In the current paper we will obtain Zetel's generalization of Schlömilch's theorem and other similar theorems as corollaries of inequalities about triangular areas in the corresponding configurations. We will prove some of these inequalities using discrete version of Hölder's inequality \cite{bel} p. 20.

\textbf{Hölder's inequality (discrete case).} \textit{If $x_{ij}$ $(i=1,\ldots,n; j=1,\ldots,m)$ are non-negative numbers, $p_j>1$ and $\sum_{j=1}^m \frac{1}{p_j}=1$ then
$$
\sum_{i=1}^n\prod_{j=1}^m x_{ij}\le \prod_{j=1}^m\left(\sum_{i=1}^n x_{ij}^{p_j}\right)^{\frac{1}{p_j}}. \eqno(3)
$$}
Hölder's inequality made it possible to prove the inequalities in the current paper without any use of calculus.

We also considered the following result by J.F. Rigby \cite{rigby} (see also \cite{mit} p. 340).

\textbf{Rigby's inequality.} \textit{Let $p$, $q$, $r$, $x$, and $y$ denote the areas of $\triangle AEF$, $\triangle BFD$, $\triangle CDE$, $\triangle DEF$, and $\triangle G_1G_2G_3$ (Figure 1).
Then
$$
x^3+(p+q+r)x^2-4pqr\ge 0, \eqno(4)
$$
with equality if and only if cevians $AD$, $BE$, and $CF$ are concurrent.}

The equality case is 
known as Möbius' theorem \cite{mob} p. 198 (see also \cite{balk} p. 95).

\textbf{Möbius' theorem.} \textit{If cevians $AD$, $BE$, and $CF$ are concurrent then
$$
x^3+(p+q+r)x^2-4pqr=0.
$$}
In the current paper we will prove refinement of inequality (4):
$$
x^3+(p+q+r)x^2-4pqr\ge x^2y. \eqno(5)
$$
Interesting inequalities involving the areas in the above configurations also appeared in \cite{oxman}. 
\section{Main results}
First, general sharp inequality about the areas of triangles formed by cevians of a triangle, will be proved. After the proof its special cases corresponding to concurrent cevians will be discussed.
\begin{theorem} Let $D$ and $K$, $E$ and $L$, $F$ and $M$ be arbitrary points on sides $BC$, $AC$, and $AB$, repsectively, of a triangle $ABC$. Denote $AK\cap EF=N$, $BL\cap DF=Q$, $CM\cap DE=P$, $DN\cap EQ=R$, $FP\cap EQ=S$, and $FP\cap DN=T$. Denote also $\frac{|BD|}{|DC|}=\lambda_1$, $\frac{|CE|}{|EA|}=\lambda_2$, $\frac{|AF|}{|FB|}=\lambda_3$, $\frac{|BK|}{|KC|}=u$, $\frac{|CL|}{|LA|}=v$, and $\frac{|AM|}{|MB|}=w$.
Then
$$
\frac{\textnormal{Area}(\triangle RST)}{\textnormal{Area}(\triangle DEF)}\le\frac{(\lambda_1 \lambda_2 \lambda_3uvw-1)^2}{\left(\sqrt[3]{(\lambda_1 \lambda_2\lambda_3uvw)^2}+\sqrt[3]{\lambda_1 \lambda_2\lambda_3uvw}+1\right)^3}. \eqno(6)
$$
\end{theorem}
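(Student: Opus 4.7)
The plan is to observe that $\triangle RST$ is itself a Steiner--Routh triangle of $\triangle DEF$, to express the corresponding new ratios in terms of the original ones, and then to use Hölder's inequality to control the resulting Routh denominator.

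\textbf{Step 1 (reduction to Routh for $\triangle DEF$).} Since $N\in EF$, $Q\in FD$, $P\in DE$, the segments $DN$, $EQ$, $FP$ are cevians of $\triangle DEF$ and $R$, $S$, $T$ are precisely their three pairwise intersections. Setting
$$\nu_1=\frac{|EN|}{|NF|},\qquad \nu_2=\frac{|FQ|}{|QD|},\qquad \nu_3=\frac{|DP|}{|PE|},$$
formula (1) applied to $\triangle DEF$ yields
$$\frac{\textnormal{Area}(\triangle RST)}{\textnormal{Area}(\triangle DEF)}=\frac{(\nu_1\nu_2\nu_3-1)^2}{(\nu_1\nu_2+\nu_1+1)(\nu_2\nu_3+\nu_2+1)(\nu_3\nu_1+\nu_3+1)}.$$

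\textbf{Step 2 (evaluating the ratios).} Working in barycentric coordinates with respect to $ABC$ (or equivalently applying Menelaus three times), a direct calculation gives
$$\nu_1=\frac{1}{\lambda_3 u},\qquad \nu_2=\frac{1}{\lambda_1 v},\qquad \nu_3=\frac{1}{\lambda_2 w}.$$
Writing $\Lambda=\lambda_1\lambda_2\lambda_3 uvw$, this produces $\nu_1\nu_2\nu_3=1/\Lambda$, whence the Routh numerator equals $(\Lambda-1)^2/\Lambda^2$.

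\textbf{Step 3 (Hölder's inequality).} The inequality (6) now reduces to
$$(\nu_1\nu_2+\nu_1+1)(\nu_2\nu_3+\nu_2+1)(\nu_3\nu_1+\nu_3+1)\ge \frac{1}{\Lambda^2}\left(\Lambda^{2/3}+\Lambda^{1/3}+1\right)^3.$$
I will apply (3) with $n=m=3$ and $p_j=3$, using the $3\times 3$ array whose $j$-th column is $\bigl((\nu_j\nu_{j+1})^{1/3},\,\nu_j^{1/3},\,1\bigr)$ (indices taken mod $3$). The three column cube-sums are precisely the factors on the left-hand side, while the sum of the three row products equals $(\nu_1\nu_2\nu_3)^{2/3}+(\nu_1\nu_2\nu_3)^{1/3}+1=\Lambda^{-2/3}+\Lambda^{-1/3}+1$. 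Cubing the resulting Hölder bound and using the identity $\Lambda^2(\Lambda^{-2/3}+\Lambda^{-1/3}+1)^3=(\Lambda^{2/3}+\Lambda^{1/3}+1)^3$ yields (6). Equality in Hölder requires column proportionality, which forces $\nu_1=\nu_2=\nu_3$, i.e., $\lambda_3 u=\lambda_1 v=\lambda_2 w$; a direct substitution confirms that in that case both sides of (6) indeed agree, so the bound is sharp.

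\textbf{Main obstacle.} The only place where a mistake is easy to make is Step 2: the cyclic correspondence must be tracked carefully, since the cevian $DN$ of $\triangle DEF$ encodes $\lambda_3$ and $u$ rather than $\lambda_1$ and $u$, and similar shifts occur for $\nu_2$ and $\nu_3$. Once these identifications are fixed, the Routh reduction and the Hölder step are routine.
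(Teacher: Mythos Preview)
Your overall strategy matches the paper's: reduce to Routh's formula for $\triangle DEF$ and then control the denominator by H\"older. However, Step~2 contains a genuine error. The claimed values $\nu_1=\frac{1}{\lambda_3 u}$, $\nu_2=\frac{1}{\lambda_1 v}$, $\nu_3=\frac{1}{\lambda_2 w}$ are wrong. A Menelaus computation (exactly the one the paper carries out) gives
\[
\frac{|FN|}{|NE|}=\frac{u\lambda_3(1+\lambda_2)}{1+\lambda_3},\qquad
\frac{|DQ|}{|QF|}=\frac{v\lambda_1(1+\lambda_3)}{1+\lambda_1},\qquad
\frac{|EP|}{|PD|}=\frac{w\lambda_2(1+\lambda_1)}{1+\lambda_2},
\]
so your $\nu_i$ are each off by a factor $(1+\lambda_j)/(1+\lambda_k)$. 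You can check this already for $\lambda_3=1$, $\lambda_2=2$, $u=1$: the correct ratio $\frac{|FN|}{|NE|}$ equals $3/2$, not $1$.

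By a lucky cancellation these extra factors disappear in the product, so $\nu_1\nu_2\nu_3=1/\Lambda$ is still correct and the H\"older step in your Step~3 goes through unchanged once the right $\nu_i$ are inserted; thus the inequality~(6) survives. What does \emph{not} survive is your equality analysis: the sharp case is $\alpha=\beta=\gamma$, i.e.
\[
\frac{u\lambda_3(1+\lambda_2)}{1+\lambda_3}=\frac{v\lambda_1(1+\lambda_3)}{1+\lambda_1}=\frac{w\lambda_2(1+\lambda_1)}{1+\lambda_2},
\]
and not $\lambda_3 u=\lambda_1 v=\lambda_2 w$ as you state. Ironically, this is precisely the ``main obstacle'' you flagged at the end; redo the Menelaus/barycentric computation carefully and the proof is complete.
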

\begin{figure}[htbp]
\centerline{\includegraphics[scale=0.4]{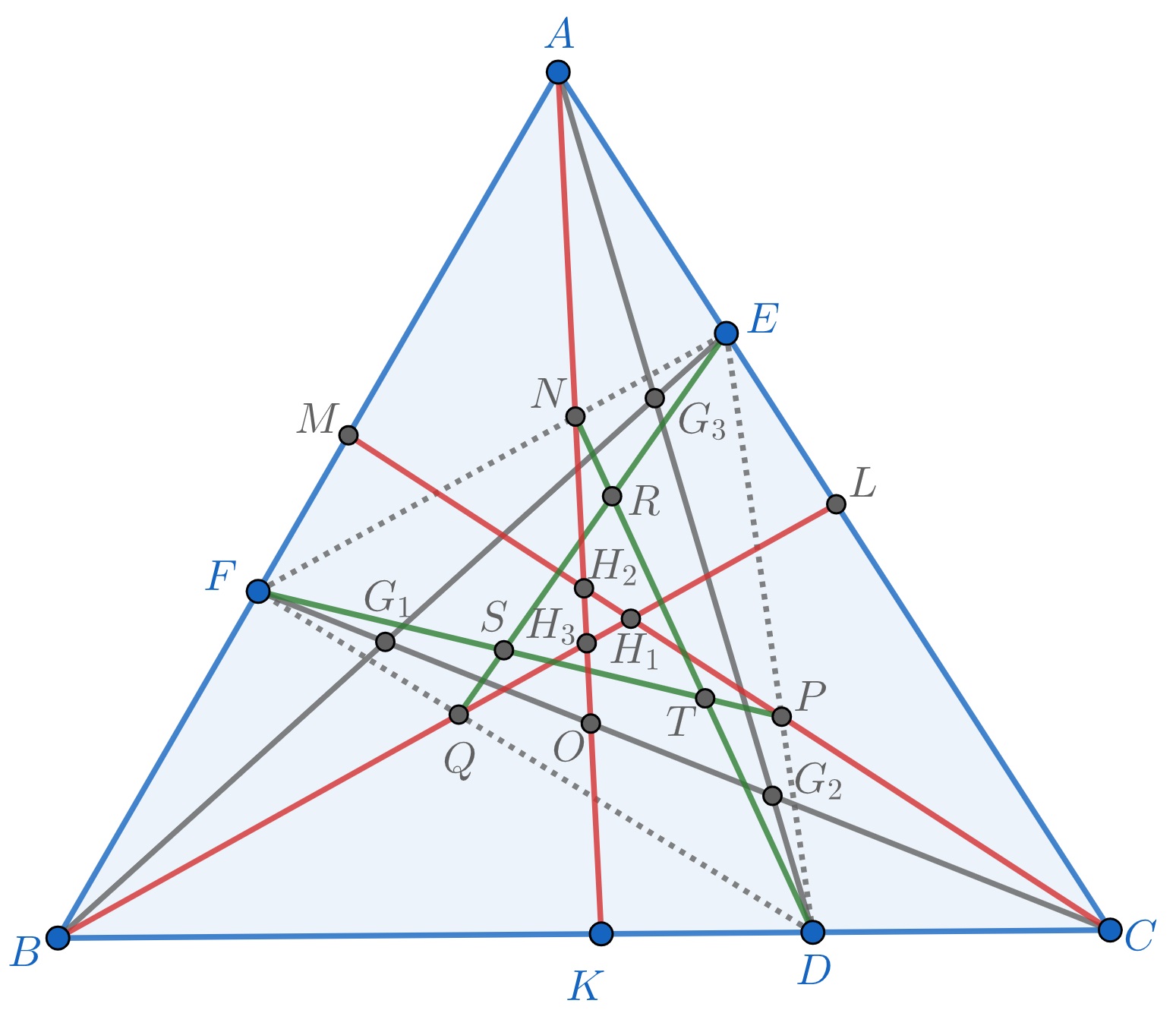}}
\label{fig3}
\caption{Inequality about the ratio of areas of $\triangle RST$ and $\triangle DEF$.}
\end{figure}
\begin{proof} Let $O$ be intersection point of lines $AK$ and $CF$ (see Fig. 3). By Menelaus' theorem $\frac{|BK|}{|KC|}\cdot\frac{|CO|}{|OF|}\cdot\frac{|FA|}{|AB|}=1$. Then $\frac{|CO|}{|OF|}=\frac{1+\lambda_3}{\lambda_3}\cdot\frac{1}{u}$. Similarly, by Menelaus' theorem $\frac{|CO|}{|OF|}\cdot\frac{|FN|}{|NE|}\cdot\frac{|EA|}{|AC|}=1$. Then $$\frac{|FN|}{|NE|}=\alpha:=\frac{u\lambda_3(1+\lambda_2)}{1+\lambda_3}.$$ Similarly, $$\frac{|DQ|}{|QF|}=\beta:=\frac{v\lambda_1(1+\lambda_3)}{1+\lambda_1},\ \frac{|EP|}{|PD|}=\gamma:=\frac{w\lambda_2(1+\lambda_1)}{1+\lambda_2}.$$ By applying formula (1) to $\triangle DEF$ and points $N,Q,P$ on its sides, and noting that orientation has changed, we obtain
$$\frac{\textnormal{Area}(\triangle RST)}{\textnormal{Area}(\triangle DEF)}=\frac{(\alpha \beta \gamma-1)^2}{(\alpha \gamma +\alpha +1)(\beta \alpha+\beta +1)(\gamma\beta +\gamma +1)}. \eqno(7)$$
By Hölder's inequality (3),
$$(\alpha \gamma +\alpha +1)(\beta \alpha+\beta +1)(\gamma \beta +\gamma +1)\ge\left(\sqrt[3]{(\alpha \beta \gamma)^2}+\sqrt[3]{\alpha \beta \gamma}+1\right)^3, \eqno(8)$$
with equality case only when $\alpha =\beta =\gamma$. From (7) and (8) it follows that
$$\frac{\textnormal{Area}(\triangle RST)}{\textnormal{Area}(\triangle DEF)}\le\frac{(\alpha \beta \gamma-1)^2}{\left(\sqrt[3]{(\alpha \beta \gamma)^2}+\sqrt[3]{\alpha \beta \gamma}+1\right)^3}. \eqno(9)$$
Since $\alpha \beta \gamma=\lambda_1 \lambda_2\lambda_3uvw$, (6) follows from (9). The equality case in (6) holds true when $$\frac{u\lambda_3(1+\lambda_2)}{1+\lambda_3}=\frac{v\lambda_1(1+\lambda_3)}{1+\lambda_1}=\frac{w\lambda_2(1+\lambda_1)}{1+\lambda_2}.$$
\end{proof}
In particular, if $\lambda_1 \lambda_2\lambda_3uvw=1$ in (6), then $\textnormal{Area}(\triangle RST)=0$ and therefore lines $DN$, $EQ$, and $FP$ are concurrent. This generalizes Schlömilch's theorem even further (Fig. 4).
\begin{corollary} Let $D$ and $K$, $E$ and $L$, $F$ and $M$ be points on sides $BC$, $AC$, and $AB$, respectively, of a triangle $ABC$. Denote $AK\cap EF=N$, $BL\cap DF=Q$, $CM\cap DE=P$. If $$\frac{|BD|}{|DC|}\cdot\frac{|CE|}{|EA|}\cdot\frac{|AF|}{|FB|}\cdot\frac{|BK|}{|KC|}\cdot\frac{|CL|}{|LA|}\cdot\frac{|AM|}{|MB|}=1,$$ then $DN$, $EQ$, and $FP$ are concurrent.
\end{corollary}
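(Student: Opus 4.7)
My plan is to deduce the corollary directly from Theorem~2.1 together with a short degeneracy argument, in exactly the manner by which Ceva's theorem is recovered from Steiner--Routh's formula in the Introduction.

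First I would substitute the hypothesis $\lambda_1\lambda_2\lambda_3 uvw = 1$ into inequality (6). The numerator $(\lambda_1\lambda_2\lambda_3 uvw - 1)^2$ then vanishes while the denominator equals $(\sqrt[3]{1}+\sqrt[3]{1}+1)^3=27>0$, so (6) forces $\textnormal{Area}(\triangle RST)\le 0$; combined with the non-negativity of area this yields $\textnormal{Area}(\triangle RST)=0$. In other words, the three points $R=DN\cap EQ$, $S=EQ\cap FP$, and $T=FP\cap DN$ lie on some common line $\ell$.

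It then remains to upgrade collinearity of $R,S,T$ to concurrency of the three lines $DN,EQ,FP$. I would observe that $\{R,T\}\subset DN$, $\{R,S\}\subset EQ$, and $\{S,T\}\subset FP$. If some two of $R,S,T$ coincide---say $R=T$---then this common point lies simultaneously on all three lines, giving the required concurrency. Otherwise $R,S,T$ are pairwise distinct, and then each of $DN,EQ,FP$ contains two distinct points of $\ell$ and must coincide with $\ell$ itself; this would force $D,E,F\in\ell$, contradicting the fact that three points lying on the three distinct sides of a triangle with all side-ratios positive cannot be collinear (the signed form of Menelaus' theorem would require $\lambda_1\lambda_2\lambda_3=-1$).

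The only step requiring any care is this final degeneracy analysis, which is essentially the same trick already foreshadowed in the Introduction in the discussion of why $\triangle G_1G_2G_3$ cannot be non-trivially degenerate; all of the computational heart of the corollary is packaged inside Theorem~2.1, and what remains is a purely combinatorial case analysis of which two of $R,S,T$ can coincide.
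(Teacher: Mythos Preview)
Your proposal is correct and follows exactly the paper's approach: substitute $\lambda_1\lambda_2\lambda_3 uvw=1$ into (6) to force $\textnormal{Area}(\triangle RST)=0$, then conclude concurrency. The paper dispatches the degeneracy step in a single clause (``and therefore lines $DN$, $EQ$, and $FP$ are concurrent''), relying on the Introduction's remark about $\triangle G_1G_2G_3$; you have simply written out that argument explicitly for $\triangle RST$, reducing the impossible case to collinearity of $D,E,F$ via Menelaus, which is the natural analogue here.
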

\begin{figure}[htbp]
\centerline{\includegraphics[scale=0.4]{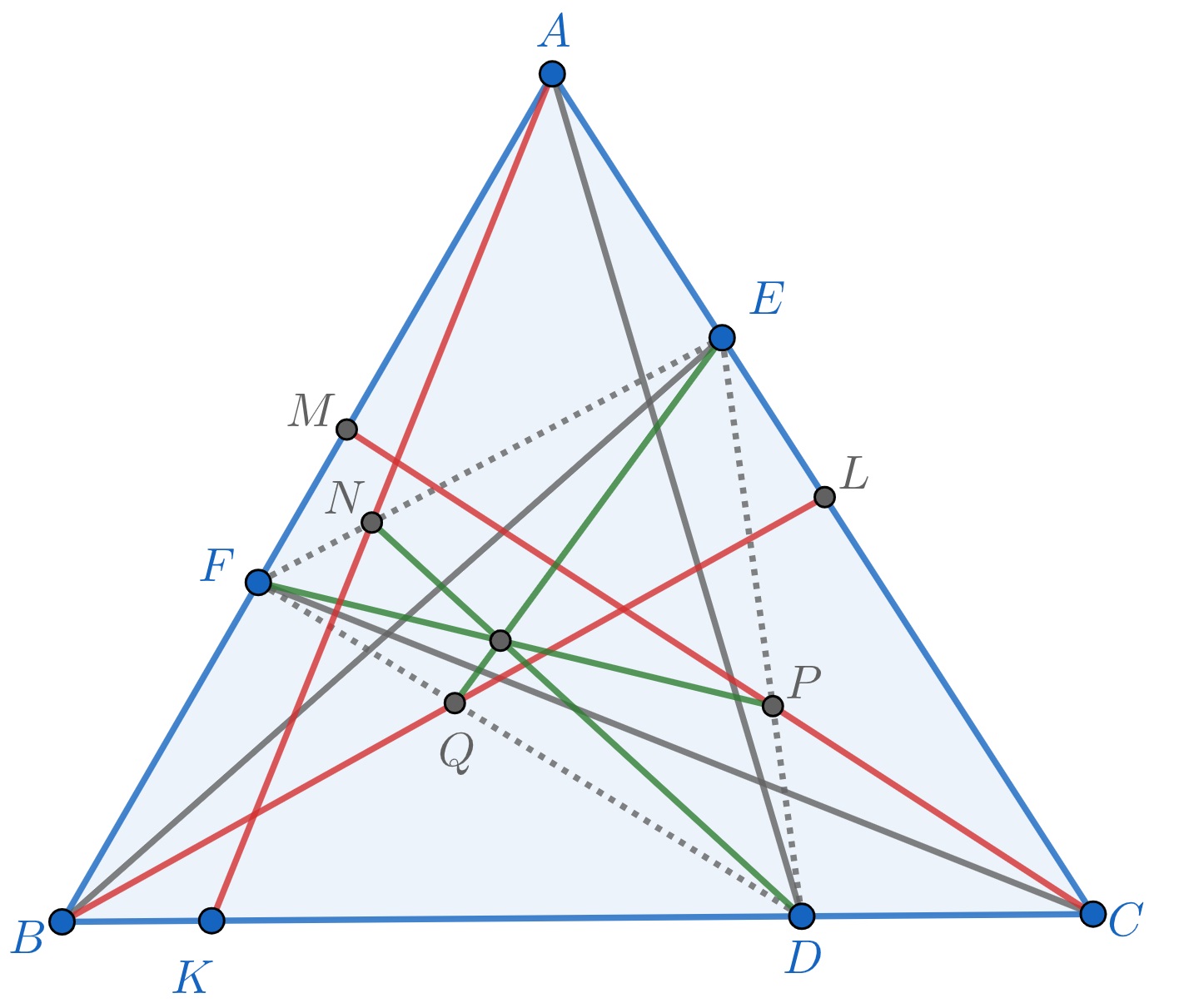}}
\label{fig35}
\caption{Generalization of Zetel's theorem.}
\end{figure}
The special case $uvw=1$ of Theorem 2.1 is also of interest (Fig. 5). 
\begin{corollary} Let $D,E,$ and $F$ be arbitrary points on sides $BC$, $AC$, and $AB$, repsectively, of a triangle $ABC$. Let cevians $AK$, $BL$, and $CM$ of triangle $ABC$ be concurrent at point $H$. Denote $AK\cap EF=N$, $BL\cap DF=Q$, $CM\cap DE=P$, $DN\cap EQ=R$, $FP\cap EQ=S$, and $FP\cap DN=T$. Denote also $\frac{|BD|}{|DC|}=\lambda_1$, $\frac{|CE|}{|EA|}=\lambda_2$, and $\frac{|AF|}{|FB|}=\lambda_3$. 
Then
$$
\frac{\textnormal{Area}(\triangle RST)}{\textnormal{Area}(\triangle DEF)}\le\frac{(\lambda_1 \lambda_2 \lambda_3-1)^2}{\left(\sqrt[3]{(\lambda_1 \lambda_2\lambda_3)^2}+\sqrt[3]{\lambda_1 \lambda_2\lambda_3}+1\right)^3}.
$$
\end{corollary}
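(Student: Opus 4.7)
The plan is to derive Corollary 2.3 as an immediate consequence of Theorem 2.1 by exploiting the hypothesis that $AK$, $BL$, $CM$ are concurrent. Concretely, I would invoke Ceva's theorem on the triangle $ABC$ with the cevian triple $AK, BL, CM$ meeting at $H$: this gives
\[
uvw \;=\; \frac{|BK|}{|KC|}\cdot\frac{|CL|}{|LA|}\cdot\frac{|AM|}{|MB|} \;=\; 1.
\]
Once this identity is in hand, the rest is pure substitution.

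Next, I would observe that the configuration of points $D,E,F,K,L,M,N,Q,P,R,S,T$ in Corollary 2.3 is literally the configuration of Theorem 2.1 (the ratios $\lambda_1,\lambda_2,\lambda_3$ and the auxiliary intersections $N,Q,P,R,S,T$ are defined in exactly the same way). Therefore inequality (6) applies verbatim. Substituting $uvw=1$ into the product $\lambda_1\lambda_2\lambda_3 uvw$ on the right-hand side of (6) collapses it to $\lambda_1\lambda_2\lambda_3$, which gives precisely
\[
\frac{\textnormal{Area}(\triangle RST)}{\textnormal{Area}(\triangle DEF)}\le\frac{(\lambda_1 \lambda_2 \lambda_3-1)^2}{\left(\sqrt[3]{(\lambda_1 \lambda_2\lambda_3)^2}+\sqrt[3]{\lambda_1 \lambda_2\lambda_3}+1\right)^3},
\]
which is the desired estimate.

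There is no real obstacle here; the only thing to double-check is that the definitions and the orientation conventions carry over without sign changes (recall that in the proof of Theorem 2.1 the change of orientation in passing from $\triangle ABC$ to $\triangle DEF$ was already absorbed into formula (7)). If one wants to record the equality condition as well, it is worth noting that the chain $\alpha=\beta=\gamma$ from the proof of Theorem 2.1 combined with $uvw=1$ yields an explicit compatibility relation between $\lambda_1,\lambda_2,\lambda_3$ and $u,v,w$, but this is optional and not required for the statement of Corollary 2.3.
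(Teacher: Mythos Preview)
Your proposal is correct and matches the paper's own proof essentially line for line: the paper also invokes Ceva's theorem to obtain $uvw=1$, substitutes this into inequality~(6) so that $\alpha\beta\gamma=\lambda_1\lambda_2\lambda_3$, and concludes. The paper additionally records the explicit equality conditions for $u,v,w$ in terms of $\lambda_1,\lambda_2,\lambda_3$, which you rightly flagged as optional.
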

\begin{proof}
Denote as before $\frac{|BK|}{|KC|}=u$, $\frac{|CL|}{|LA|}=v$, and $\frac{|AM|}{|MB|}=w$. 
Since $uvw=1$ (Ceva's theorem), $\alpha \beta \gamma=\lambda_1 \lambda_2\lambda_3$, and therefore the inequality follows from (6). The equality case holds true when $$u=\frac{1+\lambda_3}{1+\lambda_2}\sqrt[3]{\frac{\lambda_1\lambda_2}{\lambda_3^2}},\  v=\frac{1+\lambda_1}{1+\lambda_3}\sqrt[3]{\frac{\lambda_2\lambda_3}{\lambda_1^2}},\ w=\frac{1+\lambda_2}{1+\lambda_1}\sqrt[3]{\frac{\lambda_1\lambda_3}{\lambda_2^2}}.$$
\end{proof}
\begin{figure}[htbp]
\centerline{\includegraphics[scale=0.4]{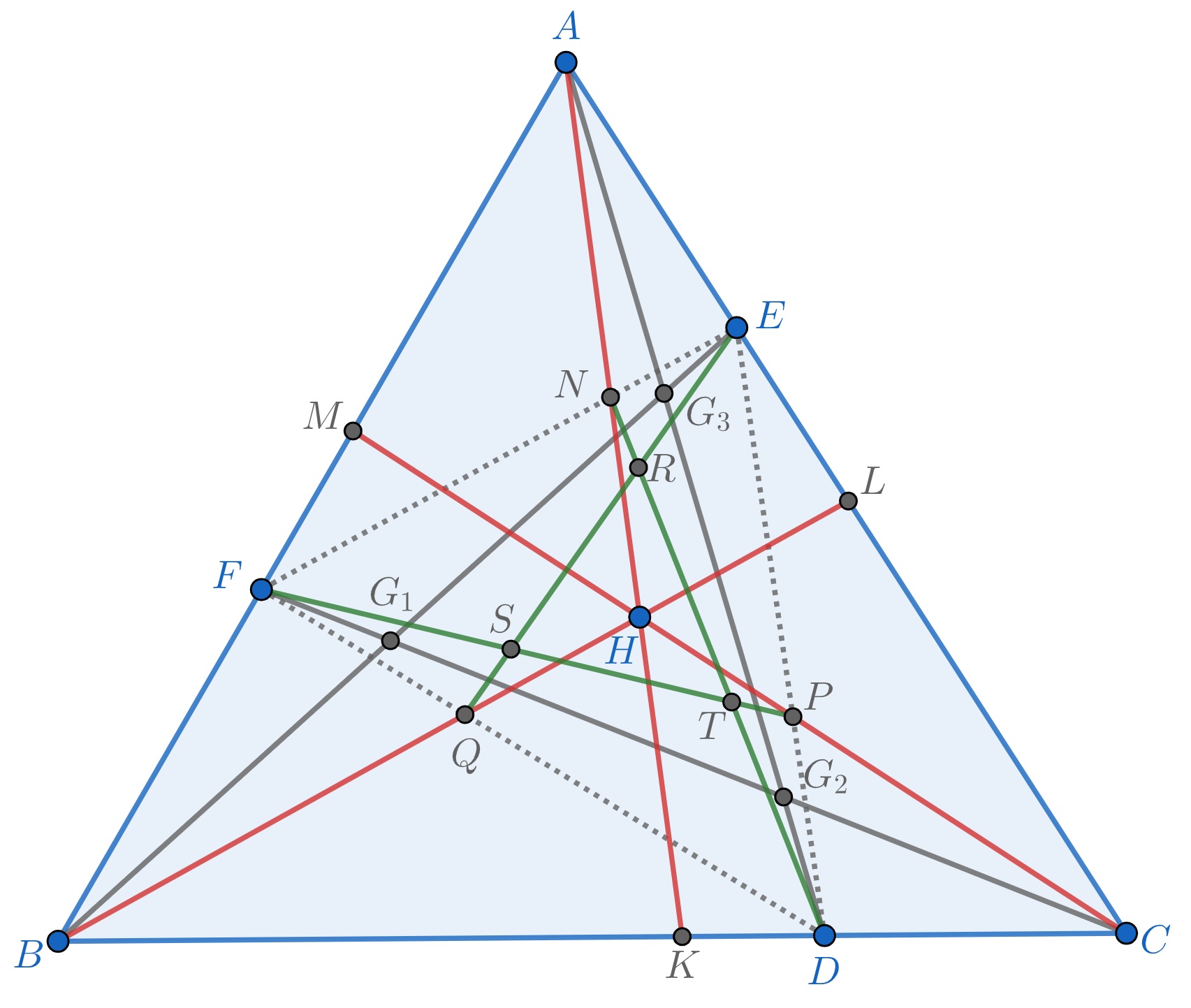}}
\label{fig5}
\caption{A new proof of Zetel's generalization of Schlömilch's theorem.}
\end{figure}
Note that if $\lambda_1 \lambda_2 \lambda_3=1$ then Corollary 2.3 implies Zetel's generalization of Schlömilch's theorem. Denote  $BE\cap CF=G_1$, $AD\cap CF=G_2$, and $AD\cap BE=G_3$. 
We can also observe that if $H\in \triangle G_1G_2G_3$, then $\triangle RST \subset \triangle G_1G_2G_3$, and therefore
$$
\textnormal{Area}(\triangle RST)<\textnormal{Area}(\triangle G_1G_2G_3). \eqno(10)
$$
In general (10) is not always true. For example, if
$\lambda_1=1$, $\lambda_2=0.001$, $\lambda_3=1$, $u=\frac{1}{3}$, $v=\frac{3}{40}$, $w=40$, then by (1), (2), and (7),
$$
\frac{\textnormal{Area}(\triangle RST)}{\textnormal{Area}(\triangle G_1G_2G_3)}=\frac{\lambda_1 \lambda_2 \lambda_3+1}{(\lambda_1 +1)(\lambda_2 +1)(\lambda_3 +1)}\times$$ $$\times\frac{(\lambda_1 \lambda_2+\lambda_1 +1)(\lambda_2 \lambda_3+\lambda_2 +1)(\lambda_3 \lambda_1+\lambda_3 +1)}{(\alpha \gamma +\alpha +1)(\beta \alpha+\beta +1)(\gamma \beta +\gamma +1)}\approx1.079>1. \eqno(11)
$$
By considering limiting cases $\lambda_1=\epsilon$, $\lambda_2={\epsilon}$, $\lambda_3=\epsilon^2$, $u=\epsilon$, $v=\epsilon$, $w=\frac{1}{\epsilon^2}$, where $\epsilon\rightarrow0^+$ and $\epsilon\rightarrow+\infty$, we can see that the ratio of areas in (11) can be arbitrarily small and arbitrarily large positive numbers, respectively.

Let us now consider special case $\lambda_1 \lambda_2 \lambda_3=1$ of the configuration in Theorem 2.1 ($G=G_1=G_2=G_3$, Fig. 6). From (7) we obtain
$$
\frac{\textnormal{Area}(\triangle RST)}{\textnormal{Area}(\triangle DEF)}=\frac{(uvw-1)^2}{(\alpha \gamma +\alpha +1)(\beta \alpha+\beta +1)(\gamma \beta +\gamma +1)}. \eqno(12)
$$
Denote $BL\cap CM=H_1$, $AK\cap CM=H_2$, and $AK\cap BL=H_3$. By equality (1) for $\triangle H_1H_2H_3$,
$$\frac{\textnormal{Area}(\triangle ABC)}{\textnormal{Area}(\triangle H_1H_2H_3)}=\frac{(uv+u +1)(vw+v+1)(wu+w+1)}{(uvw-1)^2}. \eqno(13)$$ By multiplying equalities (2), (12), and (13), we obtain
$$
\frac{\textnormal{Area}(\triangle RST)}{\textnormal{Area}(\triangle H_1H_2H_3)}=\frac{(uv+u +1)(vw+v+1)(wu+w+1)}{(\alpha \gamma +\alpha +1)(\beta \alpha+\beta +1)(\gamma \beta +\gamma +1)}\times
$$
$$\times\frac{2}{(\lambda_1 +1)(\lambda_2 +1)(\lambda_3 +1)}. \eqno(14)$$
We observe that if $G\in \triangle H_1H_2H_3$, then $\triangle RST \subset \triangle H_1H_2H_3$, and therefore
$$
\textnormal{Area}(\triangle RST)<\textnormal{Area}(\triangle H_1H_2H_3). \eqno(15)
$$
In general (15) is not always true. For example, if
$\lambda_1=1$, $\lambda_2=1$, $\lambda_3=1$, $u=0.01$, $v=1$, $w=20$, then by (14),
$$
\frac{\textnormal{Area}(\triangle RST)}{\textnormal{Area}(\triangle H_1H_2H_3)}\approx 1.19>1. \eqno(16)
$$
\begin{figure}[htbp]
\centerline{\includegraphics[scale=0.4]{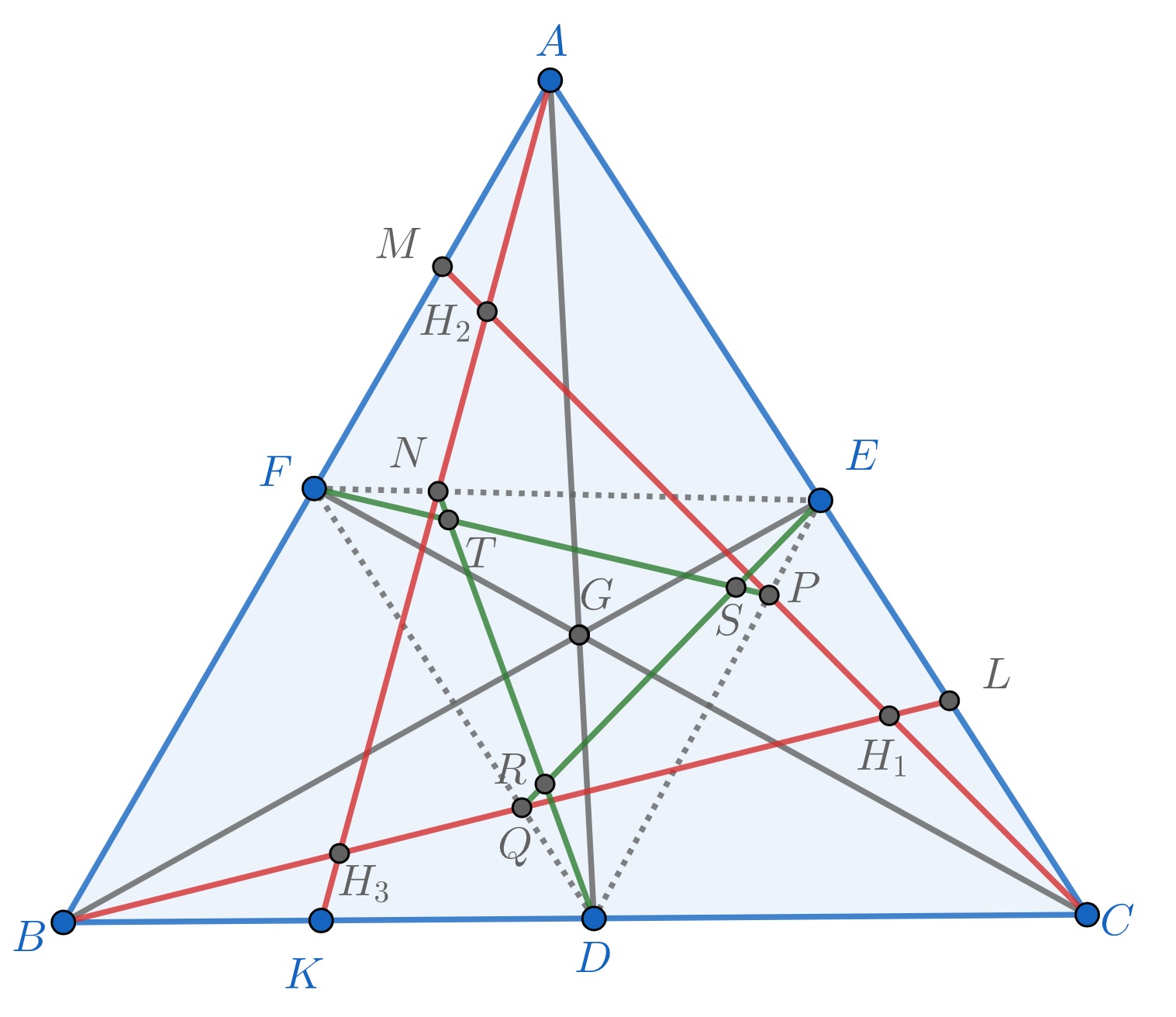}}
\label{fig6}
\caption{Comparison of areas of $\triangle RST$ and $\triangle H_1H_2H_3$.}
\end{figure}
By considering limiting cases $\lambda_1=\lambda_2=\lambda_3=1$, $u=\epsilon$, $v=1$, $w=\frac{1}{\epsilon^2}$, where $\epsilon\rightarrow0^+$ and $\epsilon\rightarrow+\infty$, we can see that the ratio of areas in (16) can be arbitrarily large and arbitrarily small positive numbers, respectively.

We will now return to configuration in Fig. 1. A.F. Möbius considered areas in the special case where $AD$, $BE$, and $CF$ are concurrent \cite{mob} p. 198. J.F. Rigby's inequality (4) generalized this result \cite{rigby} (see also \cite{mit} p. 340). The following theorem is further generalization of these two results.
\begin{theorem} Let $D,E,$ and $F$ be arbitrary points on sides $BC$, $AC$, and $AB$, repsectively, of a triangle $ABC$. Denote  $BE\cap CF=G_1$, $AD\cap CF=G_2$, and $AD\cap BE=G_3$.
Let areas of $\triangle AEF$, $\triangle BFD$, $\triangle CDE$, $\triangle DEF$, and $\triangle G_1G_2G_3$ be $p$, $q$, $r$, $x$, and $y$ (Figure 1). Then
$$
x^3+(p+q+r)x^2-4pqr\ge x^2y.
$$
\end{theorem}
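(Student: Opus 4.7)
The plan is to parametrize everything in terms of $\lambda_1,\lambda_2,\lambda_3$ and the total area $S=\text{Area}(\triangle ABC)$, then reduce the desired inequality to a symmetric polynomial inequality that already appeared (in disguise) in the proof of Theorem~2.1.

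First I would use the standard product-of-ratios formulas to write
$$\frac{p}{S}=\frac{\lambda_3}{(1+\lambda_2)(1+\lambda_3)},\quad \frac{q}{S}=\frac{\lambda_1}{(1+\lambda_3)(1+\lambda_1)},\quad \frac{r}{S}=\frac{\lambda_2}{(1+\lambda_1)(1+\lambda_2)},$$
so that, setting $t=\lambda_1\lambda_2\lambda_3$ and $D=(1+\lambda_1)(1+\lambda_2)(1+\lambda_3)$, one has $pqr/S^3=t/D^2$. Formula (2) gives $x/S=(t+1)/D$, hence $x^2/(pqr)=(t+1)^2/(tS)$. Since $p+q+r+x=S$, the target inequality $x^3+(p+q+r)x^2-4pqr\ge x^2y$ is equivalent to $x^2(S-y)\ge 4pqr$, i.e.
$$(t+1)^2(S-y)\ge 4tS,\qquad\text{or equivalently}\qquad \frac{y}{S}\le\frac{(t-1)^2}{(t+1)^2}.$$
The case $t=1$ forces $y=0$ (that is Möbius' theorem) and equality holds, so from now on I may assume $t\neq 1$.

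Substituting the Steiner--Routh formula (1) for $y/S$, cancelling the common factor $(t-1)^2>0$, and clearing denominators reduces the problem to the purely algebraic inequality
$$(\lambda_1\lambda_2+\lambda_1+1)(\lambda_2\lambda_3+\lambda_2+1)(\lambda_3\lambda_1+\lambda_3+1)\ge (\lambda_1\lambda_2\lambda_3+1)^2.$$
For this I would invoke H\"older's inequality (3) exactly as in the proof of Theorem~2.1, with the three triples $(\lambda_1\lambda_2,\lambda_1,1)$, $(\lambda_2\lambda_3,\lambda_2,1)$, $(\lambda_3\lambda_1,\lambda_3,1)$, to obtain
$$(\lambda_1\lambda_2+\lambda_1+1)(\lambda_2\lambda_3+\lambda_2+1)(\lambda_3\lambda_1+\lambda_3+1)\ge \bigl(t^{2/3}+t^{1/3}+1\bigr)^3.$$

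The only remaining step, and the main (though elementary) obstacle, is the scalar inequality $(t^{2/3}+t^{1/3}+1)^3\ge (t+1)^2$ for $t\ge 0$. Setting $s=t^{1/3}\ge 0$ and expanding, one finds
$$(s^2+s+1)^3-(s^3+1)^2 = s\bigl(3s^4+6s^3+5s^2+6s+3\bigr)\ge 0,$$
which closes the chain of inequalities. Chasing the equality conditions back through the argument shows that equality in the theorem occurs precisely when $t=\lambda_1\lambda_2\lambda_3=1$, i.e.\ when the cevians are concurrent, recovering Möbius' theorem as the extremal case of Rigby's refinement.
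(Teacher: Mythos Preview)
Your argument is correct and follows essentially the same route as the paper: parametrize the areas by the ratios $\lambda_i$, use $p+q+r+x=S$ and formulas (1), (2) to reduce the inequality to
\[
(\lambda_1\lambda_2+\lambda_1+1)(\lambda_2\lambda_3+\lambda_2+1)(\lambda_3\lambda_1+\lambda_3+1)\ge(\lambda_1\lambda_2\lambda_3+1)^2,
\]
and then finish. The paper arrives at the same identity (writing the left side of the theorem as $x^2y$ times the above fraction) but simply \emph{asserts} that this fraction lies in $(1,+\infty)$. You go further and actually prove it, first by H\"older (exactly as in (8)) and then by the explicit scalar computation $(s^2+s+1)^3-(s^3+1)^2=s(3s^4+6s^3+5s^2+6s+3)\ge0$, which is a genuine addition. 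Your derivation of the formula for $x^3+(p+q+r)x^2-4pqr$ from $p+q+r+x=S$ is also self-contained, whereas the paper quotes it from \cite{mit}. So the two proofs coincide in strategy, but yours fills in the step the paper leaves to the reader.
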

\begin{proof} Denote $\frac{|BD|}{|DC|}=\lambda_1$, $\frac{|CE|}{|EA|}=\lambda_2$, and $\frac{|AF|}{|FB|}=\lambda_3$. 
Then the left side of the inequality can be written as (see \cite{mit} p. 340) 
$$
x^3+(p+q+r)x^2-4pqr=\frac{(\lambda_1 \lambda_2 \lambda_3-1)^2}{(\lambda_1 +1)^2(\lambda_2 +1)^2(\lambda_3 +1)^2}\cdot \textnormal{Area}(\triangle ABC).
$$
By (1) and (2), this can also be written as
$$
x^3+(p+q+r)x^2-4pqr=x^2y\cdot \frac{(\lambda_1 \lambda_2+\lambda_1 +1)(\lambda_2 \lambda_3+\lambda_2 +1)(\lambda_3 \lambda_1+\lambda_3 +1)}{(\lambda_1 \lambda_2 \lambda_3+1)^2}.
$$
The values of the last fraction change in interval $(1,+\infty)$ (consider limiting case $\lambda_1, \lambda_2\rightarrow 0$, $0<\lambda_3<+\infty$), and therefore inequality (5) holds true.
\end{proof}
From this proof it follows that $\lambda =1$ is the best constant for inequality
$$
x^3+(p+q+r)x^2-4pqr\ge \lambda x^2y.
$$
\textbf{Open problem.} In the case $uvw=1$ (Corollary 2.3, Figure 5), prove that the vertices of the triangle formed by lines $G_1S$, $G_2T$, and $G_3R$ are on lines $AK$, $BL$, $CM$. 

\section{Conclusion}
In the paper the use of geometric area inequalities for the proof of theorems about concurrency of lines is demonstrated. O. Schlömilch's theorem about concurrent cevians of a triangle and its generalization by S.I. Zetel were generalized further using the area method. The proved general inequality is also explored in special cases of concurrent cevians. Also a refinement of J.F. Rigby's inequality is proved.
\section*{Acknowledgments}
This work was supported by ADA University Faculty Research and Development Fund. I thank Francisco Javier García Capitán for the discussion of the configuration and informing me about the possibility of the use barycentric coordinates.

\section{Declarations}
\textbf{Ethical Approval.}
Not applicable.
 \newline \textbf{Competing interests.}
None.
  \newline \textbf{Authors' contributions.} 
Not applicable.
  \newline \textbf{Funding.}
This work was completed with the support of ADA University Faculty Research and Development Fund.
  \newline \textbf{Availability of data and materials.}
Not applicable


\begin{thebibliography}{HD82}


\normalsize
\baselineskip=17pt

\bibitem{abbud} Abboud E., On the Routh-Steiner theorem and some generalisations, The Mathematical Gazette, Vol. 99, No. 544 (March 2015) 45-53. \url{https://www.jstor.org/stable/24496901}

\bibitem{abbud2} Abboud E., Routh’s Theorem, Morgan’s Story, and New Patterns of Area Ratios, Mathematics Magazine, 95:4, (2022) 294-301. \url{https://doi.org/10.1080/0025570X.2022.2093575}

\bibitem{alt} Altshiller-Court N., College Geometry:
An Introduction to the Modern Geometry of
the Triangle and the Circle,
Second Edition, Dover, New-York, 2007.

\bibitem{balk} Balk M.B., Boltyansky V.G., Geometry of masses (in Russian), ("Kvant" series, vol. 67), "Nauka", Moscow, 1987 . 

\bibitem{bel} Beckenbach E., Bellman R., Inequalities. Springer, Berlin, 1961.

\bibitem{benyi} Bényi Á., Ćurgus B., A Generalization of Routh’s Triangle Theorem, The American Mathematical Monthly , Vol. 120, No. 9 (November 2013), 841-846. \url{https://www.jstor.org/stable/10.4169/amer.math.monthly.120.09.841}

\bibitem{bol} Bollobás B., The Art of Mathematics - Take Two: Tea Time in Cambridge. Cambridge: Cambridge University Press; 2022. \url{https://doi.org/10.1017/9781108973885}

\bibitem{bot} Bottema O. "On the Area of a Triangle in Barycentric Coordinates." Crux. Math. 8, 228-231, 1982.

\bibitem{car} Carroll E., Ghosh A.P., Nguyen X.H., Roitershtein A., Iterated Routh's Triangles, Journal for Geometry and Graphics, Volume 21 (2017), No. 2, 153-168.

\bibitem{cox} Coxeter H.S.M., Introduction to Geometry, 2nd ed. New York: Wiley, pp. 211-212, 1969.

\bibitem{deak} Deakin R.E., Hunter M.N., Routh’s Theorem, myGeodesy website, 2020. \url{https://www.mygeodesy.id.au/documents/Routh's%20Theorem.pdf}

\bibitem{dor} Dörrie H., Mathematische Miniaturen, Breslau, 1943.

\bibitem{efrem} Efremov D.D., “New geometry of triangle” Novaya geometriya treugol'nika (in Russian). Odessa, 1902. \url{https://math.ru/lib/book/djvu/ngt/ngt.djvu}

\bibitem{glais} Glaisher J.W.L., et al.: Solutions of the Cambridge senate-house problems and riders for the year 1878. Macmillan and Co, London, 1879. \url{https://archive.org/details/solutionsofcambr00glaiiala/page/32/mode/2up}

\bibitem{john} Johnson R.A., Advanced Euclidean Geometry, Dover, New York, 1960.

\bibitem{kay} Kay D.C., College Geometry: A Unified Development, CRC Press, 2011.

\bibitem{klam} Klamkin M.S., Liu A. "Three More Proofs of Routh's Theorem." Crux Math. 7, 199-203, 1981.

\bibitem{litv} Litvinov S., Marko F., Routh’s theorem for tetrahedra, Geom. Dedicata (2015) 174:155–167. \url{https://link.springer.com/content/pdf/10.1007/s10711-014-0009-9.pdf}

\bibitem{marko} Marko F., Litvinov S., On the Steiner–Routh Theorem for Simplices, The American Mathematical Monthly , Vol. 124, No. 5 (May 2017), pp. 422-435. \url{https://www.jstor.org/stable/10.4169/amer.math.monthly.124.5.422}

\bibitem{mik} Mikusiński, J.G. "Sur quelques propriétés du triangle." Ann. Univ. M. Curie-Sklodowska Sect. A 1, 45-50, 1946. \url{http://bc.umcs.pl/dlibra/publication/33693/edition/30549}

\bibitem{mit} Mitrinović D.S., Pečarić J.E., Volenec V.,
Recent Advances in Geometric Inequalities.
Part of the book series: Mathematics and its Applications (MAEE, volume 28), Kluwer, Dodrecht, 1989.

\bibitem{mob} Möbius A.F., (1827). Der barycentrische Calcul. Leipzig: J.A. Barth.
Reprinted in Baltzer, Richard, ed. (1885). "Der barycentrische Calcul". August Ferdinand Möbius Gesammelte Werke. Vol. 1. Leipzig: S. Hirzel. pp. 1–388. \url{https://archive.org/details/gesammeltewerkeh01mbuoft/page/198/mode/2up}

\bibitem{niven} Niven I., A New Proof of Routh's Theorem, Mathematics Magazine, Jan., 1976, Vol. 49, No. 1 (Jan., 1976), 25-27. \url{https://www.jstor.org/stable/2689876}

\bibitem{oprea} Oprea N., A remarkable point on the bisectrix of a triangle, Buletinul ştiinţific al Universitatii Baia Mare, Seria B, Fascicola matematică informatică, 1992, Vol. 8 (1992), 69-76. \url{https://www.jstor.org/stable/44001576}

\bibitem{oxman} Oxman V., Stupel M., Sigler A., Surprising Relations Between the Areas
of Triangles in the Configuration
of Routh’s Theorem, Journal for Geometry and Graphics,
Volume 18 (2014), No. 2, 199–203. \url{https://www.heldermann-verlag.de/jgg/jgg18/j18h2oxma.pdf}

\bibitem{ponar} Ponarin Ya.P., Elementary Geometry, Vol. 3: Triangles and Tetrahedra (in Russian), MCCME, Moscow, 2009.

\bibitem{prasolov} Prasolov V., Problems in plane geometry (in Russian), v.1, Nauka, Moscow, 1991. English version: Problems in plane and solid geometry v.1 Plane geometry, translated and edited by Dimitry Leites. \url{https://archive.org/details/planegeo/page/n109/mode/2up}

\bibitem{rigby} Rigby, J. F. “Inequalities Concerning the Areas Obtained When One Triangle Is Inscribed in Another.” Mathematics Magazine 45, no. 3 (1972): 113–16. \url{https://doi.org/10.2307/2687864}

\bibitem{routh} Routh, E. J., (1891), A Treatise on Analytical Statics with numerous examples, Volume 1, University Press,
Cambridge. \url{https://archive.org/stream/cu31924001080237#page/n102/mode/1up}

\bibitem{shir} Shirali Sh.A., How To
Prove It, At Right Angles, Vol. 4, No. 3, November 2015, 50-55. \url{https://publications.azimpremjiuniversity.edu.in/3105/1/how_to_prove_it.pdf}

\bibitem{shmin} Shminke B.A., Routh’s, Menelaus’ and Generalized Ceva’s Theorems,
Formalized Mathematics, 20(2) (2012), 157-159.
\url{https://doi.org/10.2478/v10037-012-0018-9}

\bibitem{stan} Stanciu N., Vișescu F.,  Routh’s Theorems Revisited And
Few Lemmas, Edited by Florică Anastase, Romanian Mathematical Magazine, No. 3, March, 2021. \url{https://www.ssmrmh.ro/2021/03/29/rouths-theorems-revisited/}

\bibitem{steiner} Steiner J., Gesammelte werke 1796-1863, G. Reimer, Berlin, 1881.

\bibitem{steinhaus} Steinhaus, H. Mathematical Snapshots, 3rd ed. New York: Dover, 1999.

\bibitem{su} Su S., Lee C.S.  Simultaneous Generalizations of the Theorems of Menelaus, Ceva, Routh, and Klamkin/Liu. Mathematics Magazine, 91(4), (2018) 294–303. \url{https://www.jstor.org/stable/48664950}

\bibitem{vol} Volenec V., Routhov teorem i zlatne nedijane, Osječki matematički list 16 (2016), 149-156. \url{https://hrcak.srce.hr/file/252381}

\bibitem{weis} Weisstein E.W. "Routh's Theorem." From MathWorld--A Wolfram Web Resource. \url{https://mathworld.wolfram.com/RouthsTheorem.html}

\bibitem{zetel} Zetel S.I., “Generalization of Schlömilch’s theorem” (in Russian), Collection of articles on elementary and principles of higher mathematics, Mat. Pros., Ser. 1, 11, 1937, 3-6. \url{https://www.mathnet.ru/eng/mp/v11/s1/p3}

\bibitem{zetel2} Zetel S.I., “New geometry of triangle” Novaya geometriya treugol'nika (in Russian), Uchpedgiz, 1962.

\bibitem{zuo} Zuo Q., Extension of Routh's theorem to spherical space in two dimensions, Chinese Science Bulletin, Volume 35, Issue 7: (1990) 609-610. \url{https://doi.org/10.1360/sb1990-35-7-609}

\bibitem{zvon} Zvonaru T., Zanoschi A., Mijloace de laturi şi ceviene, Recreaţii Matematice, 
Revistă de matematică pentru elevi şi profesori, 24(2), 2022, 120-124.
 

\end{thebibliography}
\end{document}